\theoremstyle{plain}
\newtheorem{theorem}{Theorem}[section]
\newtheorem{claim}[theorem]{Claim}
\newtheorem{lemma}[theorem]{Lemma}
\theoremstyle{definition}
\title{Lower bounds for piercing and coloring boxes}
\author{Istv\'an Tomon\thanks{Ume\r{a} University, \emph{e-mail}: \textbf{istvan.tomon@umu.se}}}
\date{}
\begin{document}
	\sloppy 
	
	\maketitle

\begin{abstract}
    Given a family $\mathcal{B}$ of axis-parallel boxes in $\mathbb{R}^d$, let $\tau$ denote its \emph{piercing number}, and $\nu$ its \emph{independence number}. It is an old question whether $\tau/\nu$ can be arbitrarily large for given $d\geq 2$. Here, for every $\nu$, we construct a family of axis-parallel boxes achieving  $$\tau\geq \Omega_d(\nu)\cdot\left(\frac{\log \nu}{\log\log \nu}\right)^{d-2}.$$
    This not only answers the previous question for every $d\geq 3$ positively, but also matches the best known upper bound up to double-logarithmic factors.
    
    Our main construction has further implications about the Ramsey and coloring properties of configurations of boxes as well.  We show the existence of a family of $n$ boxes in $\mathbb{R}^{d}$, whose intersection graph has clique and independence number 
       $O_d(n^{1/2})\cdot \left(\frac{\log n}{\log\log n}\right)^{-(d-2)/2}.$ This is the first improvement over the trivial upper bound $O_d(n^{1/2})$, and matches the best known lower bound up to double-logarithmic factors. Finally,  for every $\omega$ satisfying $\frac{\log n}{\log\log n}\ll \omega\ll n^{1-\varepsilon}$, we construct an intersection graph of $n$ boxes with clique number at most $\omega$, and chromatic number  $\Omega_{d,\varepsilon}(\omega)\cdot \left(\frac{\log n}{\log\log n}\right)^{d-2}.$ This matches the best known upper bound up to a factor of $O_d((\log w)(\log \log n)^{d-2})$.
\end{abstract}

\section{Introduction}

Configurations of axis-parallel boxes in $\mathbb{R}^d$ are extensively studied in combinatorial and computational geometry. There is group of old problems addressing the relationship between the piercing number, clique number, independence number, and chromatic number of families of boxes (here and later, boxes are always axis-parallel, and closed, unless stated otherwise), where the best known lower and upper bounds match up to polylogarithmic factors. However, it remained an elusive problem to deal with these logarithms, mainly because there were no known constructions beating the trivial ones by more than an absolute constant factor. In this paper, for every $d\geq 3$, we provide such constructions, also reducing the gaps between the lower and upper bounds up to a factor of $(\log\log .)^{O(1)}$. Here and later, the $O(.),\Omega(.),\Theta(.)$ notations hide a constant which might depend on the dimension $d$, but no other parameter.

\subsection{Piercing number}

Given a family of sets $\mathcal{F}$, a \emph{piercing set} (or \emph{hitting set}) of $\mathcal{F}$ is a subset of $\bigcup_{F\in\mathcal{F}}F$ having a nonempty intersection with every element of $\mathcal{F}$. The \emph{piercing number} of $\mathcal{F}$, denoted by $\tau(\mathcal{F})$, is the minimal size of a piercing set. The \emph{independence number} (also known as \emph{matching number} or \emph{packing number}) of $\mathcal{F}$, denoted by $\nu(F)$, is the maximum number of pairwise disjoint elements of $\mathcal{F}$. Clearly, $\tau(\mathcal{F})\geq \nu(\mathcal{F})$, and it is a well known result of Gallai (see e.g. \cite{HS58}) that equality holds if $\mathcal{F}$ is a family of intervals.  The situation becomes more complicated if we move to higher dimensions. In 1965, Wegner \cite{W65} conjectured that if $\mathcal{R}$ is a family of rectangles, then $\tau(\mathcal{R})\leq 2\nu(\mathcal{R})-1$, while Gy\'arf\'as and Lehel \cite{GyL85} proposed the weaker conjecture that $\tau(\mathcal{R})\leq O(\nu(\mathcal{R}))$. To begin with, it is not even clear that $\tau(\mathcal{R})$ can be bounded by a function of $\nu(\mathcal{R})$ alone, but this follows from a result of K\'arolyi \cite{K91}, who proved that $\tau(\mathcal{R})\leq O(\nu(\mathcal{R})\log \nu(\mathcal{R}))$. This was improved by Correa, Feuilloley, P\'erez-Lantero, and Soto \cite{CFPS15} to the currently best known upper bound $\tau(\mathcal{R})\leq O(\nu(\mathcal{R}))\cdot(\log\log \nu(\mathcal{R}))^{2}$. In the special case when the elements of $\mathcal{R}$ are $r$-bounded (i.e. the ratio of the two sides of every rectangle in $\mathcal{R}$ is bounded by $r$), this upper bound can be improved to linear, see \cite{C20,CH12,CSZ18}.  From below, Jel\'inek (see \cite{CFPS15}) showed that $\tau(\mathcal{R})=2\nu(\mathcal{R})-4$ can be achieved for every $\nu\geq 4$, while $\nu(\mathcal{R})=4$ and $\tau(\mathcal{R})=7$ is also realizable \cite{CD20}.

Now consider $d\geq 3$.  Let $\mathcal{B}$ be a family of boxes in $\mathbb{R}^d$, and write $\tau=\tau(\mathcal{B})$ and $\nu=\nu(\mathcal{B})$. It is a simple result that  $\nu=1$ implies $\tau=1$, which is equivalent to the statement that a family of pairwise intersecting boxes has a nonempty intersection. In the next case $\nu=2$, Fon-der-Flaass and Kostochka \cite{FK93} showed that $\tau\leq d+1$, and that there exists a construction achieving $\tau=\Omega(d^{1/2}/\log d)$. For general $\nu$, Gy\'arf\'as and Lehel \cite{GyL85} observed that $\tau\leq \nu^{d}$ is easy to prove. This was subsequently improved by  K\'arolyi \cite{K91} to $\tau\leq O(\nu)\cdot(\log \nu)^{d-1}$, which was then reproved a number of times \cite{A07,CSZ18,N00}.  K\'arolyi's proof relies on a divide-and-conquer argument, and any improvement for the $d=2$ case translates into an improvement for higher dimensions as well. Therefore, the aforementioned result of \cite{CFPS15} implies that $\tau\leq O(\nu)\cdot(\log \nu)^{d-2}(\log\log \nu)^{2}$ also holds. For completeness, we present a proof of this in Section \ref{section:upper}, see Theorem \ref{thm:piercing_upper}. For certain special families of boxes, this upper bound can be improved \cite{CSZ18}: if $\mathcal{B}$ consists of \emph{cubes}, then $\tau=O(\nu)$; also, in case  for every two intersecting boxes, one contains a corner of the other, then $\tau=O(\nu\log\log \nu)$. On the other hand, it was a long-standing open problem whether $\tau/\nu$ can be arbitrarily large for any fixed $d$, see e.g. the survey of Eckhoff \cite{E03} (p. 359). Here, we show that the answer is yes for $d\geq 3$, and even more, our construction matches the best known upper bound up to a factor of $O((\log\log \nu)^{d})$.

 \begin{theorem}\label{thm:piercing}
Let $d\geq 3$, then there exists $c> 0$ such that the following holds. For every positive integer $k$, there exists a family of boxes in $\mathbb{R}^d$ containing at most $k$ pairwise disjoint sets and having piercing number at least
$$ck\left(\frac{\log k}{\log\log k}\right)^{d-2}.$$
\end{theorem}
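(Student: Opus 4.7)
My plan is a probabilistic product construction: start with a planar base family of $\Theta(k)$ rectangles with $\nu = \tau = \Theta(k)$ and bounded point-depth (for instance, a diagonal staircase), and lift each base rectangle to a collection of boxes in $\mathbb{R}^d$ by taking its product with one interval from each of $d-2$ auxiliary interval families $\mathcal{I}_3,\ldots,\mathcal{I}_d \subseteq \mathbb{R}$. With $m = \lfloor \log k/\log\log k\rfloor$ and $|\mathcal{I}_l| \approx m$ for each $l$, the resulting family $\mathcal{B}$ has $\Theta(km^{d-2})$ boxes, each of the form $R \times J^{(3)}_{j_3} \times \cdots \times J^{(d)}_{j_d}$ with $R$ a base rectangle and $j_l \in [m]$.

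To force $\tau(\mathcal{B}) \geq \Omega(km^{d-2})$, I would design each $\mathcal{I}_l$ so that only $O(1)$ of its intervals contain any fixed real point: then any $(p_1,\ldots,p_d)$ pierces at most (planar depth at $(p_1,p_2)$) $\times \prod_{l\geq 3} O(1) = O(1)$ boxes of $\mathcal{B}$, and a double-counting argument yields the lower bound on $\tau$. To control $\nu(\mathcal{B}) \leq k$, I would use a Ramsey-style reduction: an independent subfamily $\mathcal{F} \subseteq \mathcal{B}$ induces a $d$-coloring of the pairs of $\mathcal{F}$ by choosing a ``separating'' coordinate for each pair; a suitable iterative extraction yields a large monochromatic subfamily pairwise separated along a single coordinate $l^*$, which we bound either by the planar base (when $l^* \in \{1,2\}$) or by packing in $\mathcal{I}_{l^*}$ (when $l^* \geq 3$), followed by reassembly into a bound on the whole $\mathcal{F}$.

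The main obstacle is that a naive Ramsey reduction loses exponentially in $d$, so one cannot afford pairwise disjoint subfamilies of size $\Theta(m)$ along each extra coordinate. The remedy is to endow each $\mathcal{I}_l$ with a multi-scale structure across $\Theta(\log\log k)$ different length scales, so that at every single scale the maximum pairwise disjoint subfamily has only $O(1)$ intervals and the global disjointness count is $O(\log\log k)$ per coordinate; a layered extraction argument then loses only a factor of $(\log\log k)^{d-2}$ instead of exponentially. Optimizing the scales and interval lengths gives precisely the claimed gain of $\log k/\log\log k$ per extra dimension, producing a family with $\nu \leq k$ and $\tau \geq ck(\log k/\log\log k)^{d-2}$.
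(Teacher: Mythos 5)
Your plan has a fatal structural flaw that no choice of the interval families can repair: a \emph{full product} family can never beat the piercing-to-packing ratio of its planar base. In your setup $\mathcal{B}$ consists of all combinations $R\times J^{(3)}_{j_3}\times\cdots\times J^{(d)}_{j_d}$ (this is forced by $|\mathcal{B}|=\Theta(k)\cdot\prod_l|\mathcal{I}_l|$). On one hand, the product of pairwise disjoint subfamilies is pairwise disjoint, so $\nu(\mathcal{B})\geq \nu(\mathcal{R})\cdot\prod_{l=3}^{d}\nu(\mathcal{I}_l)$, where $\mathcal{R}$ is the base. On the other hand, the Cartesian product of piercing sets for $\mathcal{R},\mathcal{I}_3,\dots,\mathcal{I}_d$ pierces every product box, so $\tau(\mathcal{B})\leq \tau(\mathcal{R})\cdot\prod_{l=3}^{d}\tau(\mathcal{I}_l)$. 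Since $\tau=\nu$ for interval families (Gallai), these combine to $\tau(\mathcal{B})/\nu(\mathcal{B})\leq \tau(\mathcal{R})/\nu(\mathcal{R})=O(1)$ for your staircase base. The same tension appears locally in the two properties you impose on each $\mathcal{I}_l$: if every point lies in at most $O(1)$ intervals of $\mathcal{I}_l$, then a piercing set covers only $O(\tau(\mathcal{I}_l))$ intervals, so $\nu(\mathcal{I}_l)=\tau(\mathcal{I}_l)=\Omega(m)$, whereas your layered Ramsey extraction needs $\nu(\mathcal{I}_l)=O(\log\log k)$; with $m=\lfloor\log k/\log\log k\rfloor$ these are incompatible, and $\nu(\mathcal{I}_3)=\Omega(m)$ already forces $\nu(\mathcal{B})=\Omega(km)\gg k$. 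Conversely, if you enforce $\nu(\mathcal{I}_l)=O(\log\log k)$, some point lies in $\Omega(m/\log\log k)$ intervals of $\mathcal{I}_l$, and the depth-based lower bound on $\tau(\mathcal{B})$ collapses. The multi-scale structure cannot circumvent this, because Gallai's theorem for intervals is scale-free.

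The paper's construction succeeds precisely by \emph{not} being a product: it takes all blocks $\prod_{i=1}^{d}[s^{t_i}p_i,\,s^{t_i}(p_i+1))$ of fixed volume $s^{k}$ inside $[0,s^k]^d$, where the shape vector $(t_1,\dots,t_d)$ is coupled across coordinates by the constraint $\sum_i t_i=k$; it then sparsifies by random sampling and controls the independence number of the sample with a container argument (equivalently, a decomposition of the intersection graph into few complete bipartite graphs), not with Ramsey extraction. Note also that there the clique number is $\Theta(\log n/\log\log n)$ rather than $O(1)$, and the piercing lower bound comes from $\tau\geq n/\omega$. If you want to keep a product-flavored viewpoint, you must at least pass to a sparse subfamily of the product for which the disjoint-times-disjoint obstruction disappears --- which is essentially what the volume constraint plus random sampling achieve.
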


\subsection{Ramsey number}

The \emph{intersection graph} of a family of sets $\mathcal{F}$ is the graph, whose vertices are the elements of $\mathcal{F}$, and two vertices are joined by an edge if they have a nonempty intersection. Ramsey properties of intersection graphs of geometric objects are extensively studied, see e.g. \cite{T20} as a general reference. Larman, Matou\v sek, Pach, and T\"or\H{o}csik \cite{LMPT94} proved that if $G$ is the intersection graph of $n$ boxes in $\mathbb{R}^{d}$, then $G$ contains either a clique or an independent set of size $\Omega(n^{1/2})\cdot(\log n)^{-(d-1)/2}$. Note that this bound can be improved to $\Omega(n^{1/2})\cdot(\log n)^{-(d-2)/2}(\log\log n)^{-1}$ using the above mentioned result about piercing numbers. Indeed, let $\mathcal{B}$ be a family of boxes realizing $G$. Setting $k=n^{1/2}(\log n)^{-(d-2)/2}(\log\log n)^{-1}$, either $\nu(\mathcal{B})\geq k$, which is equivalent to $\alpha(G)\geq k$, or  $\tau(G)\leq O(k)\cdot(\log k)^{d-2}(\log\log k)^{2}$. But then there is a point piercing at least $n/\tau(G)=\Omega(k)$ elements of $\mathcal{B}$, corresponding to a clique of size $\Omega(k)$.

From above, it is easy to construct a family of $n$ boxes, whose intersection graph contains no clique or independent set of size larger than $\sqrt{n}$. Indeed, the disjoint union of $\sqrt{n}$ cliques of size $\sqrt{n}$ can be realized as the intersection graph of boxes in any dimension. However, there were no known constructions improving this simple bound by more than a constant factor. Our next main result provides such a construction, and matches the previously described lower bound up to a factor of $O((\log \log n)^{d/2})$.

\begin{theorem}\label{thm:Ramsey}
Let $d\geq 3$, then there exists $c>0$ such that the following holds. For every positive integer $n$, there exists a family of $n$ boxes in $\mathbb{R}^{d}$, whose intersection graph $G$ contains no clique or independent set of size larger than $$cn^{1/2}\left(\frac{\log n}{\log\log n}\right)^{-(d-2)/2}.$$
\end{theorem}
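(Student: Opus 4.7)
The plan is to combine the construction of Theorem~\ref{thm:piercing} with an additional depth-control step. For any family $\mathcal{B}$ of axis-parallel boxes in $\mathbb{R}^d$, Helly's theorem implies that the clique number of the intersection graph $G$ equals the maximum depth $D(\mathcal{B})$, that is, the largest number of boxes of $\mathcal{B}$ containing a common point, while $\alpha(G) = \nu(\mathcal{B})$. Hence Theorem~\ref{thm:Ramsey} is equivalent to constructing a family of $n$ boxes with both $\nu(\mathcal{B})$ and $D(\mathcal{B})$ of order $M := cn^{1/2}(\log n/\log\log n)^{-(d-2)/2}$.

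Setting $k = M$ and applying Theorem~\ref{thm:piercing} yields a base family $\mathcal{B}_0$ satisfying $\nu(\mathcal{B}_0) \leq k$ and $\tau(\mathcal{B}_0) \geq T_0 := c_0 k(\log k/\log\log k)^{d-2}$. The trivial inequality $D(\mathcal{B}) \geq |\mathcal{B}|/\tau(\mathcal{B})$ forces $|\mathcal{B}| \lesssim k T_0 \approx k^2(\log k/\log\log k)^{d-2}$ whenever one asks for depth at most $k$, and this target size coincides exactly with $n$ after equating $k \approx n^{1/2}/(\log n/\log\log n)^{(d-2)/2}$. My plan is therefore to build a family of this size---say, by taking roughly $k$ suitably arranged scaled copies of $\mathcal{B}_0$, or by recursing the construction of Theorem~\ref{thm:piercing} one more step---while ensuring that (i) $\nu$ remains $O(k)$ and (ii) the depth at every point is $O(k)$. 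Both bounds together immediately yield $\max(\omega(G),\alpha(G)) \leq O(k) = O(M)$.

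The main obstacle is simultaneously enforcing the bound on $\nu$ and the bound on $D$. Naive arrangements trade them against each other: copies placed disjointly along an auxiliary coordinate push $\nu$ up to $\Theta(k^2)$, while copies that all overlap in a common slab push $D$ up to $\Theta(k D_0)$. Any design on $k$ intervals forcing every two to meet collapses, by Helly on the line, to a single point of depth $k$, so a simple one-dimensional separation does not suffice. A workable fix, which I expect to implement, is to interleave the ``copy design'' directly with the recursive structure of the piercing construction of Theorem~\ref{thm:piercing}, so that the layering responsible for the factor $(\log k/\log\log k)^{d-2}$ in $\tau_0$ also encodes the arrangement of the $k$ copies, ensuring each point sees only $O(1)$ layers at once. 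An alternative route is to show a posteriori that the construction of Theorem~\ref{thm:piercing} already has depth $O(|\mathcal{B}_0|/T_0)$, in which case a scale-up to $n$ boxes is routine; verifying either the interleaving or the a~posteriori depth bound is the critical technical step.
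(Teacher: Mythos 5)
Your reduction of the statement to ``build $n$ boxes with both $\nu(\mathcal{B})$ and the depth $D(\mathcal{B})$ of order $n^{1/2}(\log n/\log\log n)^{-(d-2)/2}$'' is correct (the identification $\omega(G)=D(\mathcal{B})$ via the Helly property of boxes is exactly how the paper passes between cliques and piercing). But the proposal stops at the critical step and, as written, cannot be completed from Theorem~\ref{thm:piercing} alone: that theorem, used as a black box, gives only $\nu$ and $\tau$, and the inequality $D\geq |\mathcal{B}|/\tau$ goes the wrong way --- it is a necessary condition on the size of a low-depth family, not a construction of one. Nothing in the statement of Theorem~\ref{thm:piercing} prevents its witness family from having a point of enormous depth. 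Your ``a posteriori'' route is the right instinct, but it amounts to needing a strictly stronger statement than Theorem~\ref{thm:piercing}, namely the paper's actual main result (Theorem~\ref{thm:main}): a family of $n_0$ boxes whose intersection graph has clique number (equivalently, depth) at most $O(\log n_0/\log\log n_0)$ \emph{and} independence number at most $O(n_0(\log n_0/\log\log n_0)^{-(d-1)})$. That simultaneous control is the whole content of the random-sampling and container argument of Section~3; it is not recoverable from the piercing corollary.

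Your first route (arranging $\Theta(k)$ copies of $\mathcal{B}_0$ so that each point sees $O(1)$ copies) also points in a direction the paper does not need. Once one has Theorem~\ref{thm:main}, the copying step is trivial and goes the opposite way from what you describe: take $n_0=n^{1/2}(\log n/\log\log n)^{d/2}$ boxes from the main construction and repeat each with multiplicity $s=n/n_0$. This multiplies the clique number by $s$ and leaves $\alpha$ unchanged, and it is harmless precisely because the base clique number is only polylogarithmic in $n_0$ rather than $\Theta(k)$ --- there is no need to keep the copies from overlapping. In your framing, where the base family already has $\nu$ and $D$ of order $k\approx n^{1/2}$, the interleaving problem you identify is real and you give no construction resolving it. So the proposal correctly locates the difficulty but does not close it; the missing ingredient is the explicit clique-number bound on the base construction.
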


\subsection{Coloring number}
As usual, $\omega(G),\chi(G),\alpha(G)$ denote the \emph{clique number}, \emph{chromatic number}, and \emph{independence number} of a graph $G$, respectively. A classical result of Asplund and Gr\"unbaum \cite{AG60} states that if $G$ is the intersection graph of rectangles, then $\chi(G)\leq O(\omega(G)^2)$. After sixty years, this was only recently improved by Chalermsook and Walczak \cite{CW21}  to $\chi(G)\leq O(\omega(G)\log \omega(G))$. On the other hand, there is no known construction beating the trivial bound $\chi(G)\geq \Omega(\omega(G))$.
 
 For $d\geq 3$, a celebrated construction of Burling \cite{B65} shows that the chromatic number of intersection graphs of boxes in $\mathbb{R}^d$ cannot be bounded by the clique number alone. The so called \emph{Burling graphs} provide an infinite family of graphs $G$ that are triangle-free, and if $G$ has $n$ vertices, then $\chi(G)=\Theta(\log\log n)$.  See \cite{RA08} for an alternative construction of a family of $K_5$-free intersection graphs of boxes in $\mathbb{R}^{3}$, whose chromatic number achieves a similar growth. Let $G$ be an intersection graph of $n$ boxes in $\mathbb{R}^d$ and let $\omega=\omega(G)$. Just as before, the best known upper bound on the chromatic number of $G$ is supplied by a divide-and-conquer argument (folklore). Using the aforementioned result of \cite{CW21} as base case, we get $\chi(G)\leq O(\omega)\cdot (\log\omega)(\log n)^{d-2}$. For completeness, we present the proof of this in Section \ref{section:upper}, see Theorem \ref{thm:chi_upper}. Here, we provide the first construction which matches this bound up to a factor of $O((\log w)(\log\log n)^{d-2})$ for a wide range of values of $\omega$.

 \begin{theorem}\label{thm:coloring}
Let $d\geq 3$ and $\varepsilon>0$, then there exists $c,c'\geq 0$ such that the following holds. For every positive integer $n$ and $\omega$ satisfying $c\frac{\log n}{\log\log n}\leq \omega\leq n^{1-\varepsilon}$, there exists a family of $n$ boxes in $\mathbb{R}^d$ such that its intersection graph $G$ satisfies $\omega(G)\leq \omega$ and 
$$\chi(G)\geq c'\cdot\omega\left(\frac{\log n}{\log\log n}\right)^{d-2}.$$ 
\end{theorem}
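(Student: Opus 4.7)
The plan is to deduce Theorem \ref{thm:coloring} from the piercing construction of Theorem \ref{thm:piercing} by a simple blow-up. The key elementary inequality is that for any family of boxes $\mathcal{B}$, every color class of a proper coloring of $G(\mathcal{B})$ is an independent set of boxes, so $\chi(G(\mathcal{B})) \ge |\mathcal{B}|/\nu(\mathcal{B})$. Thus, if we can build $n$ boxes with small $\nu$ and simultaneously small maximum depth (clique number), a large chromatic lower bound comes for free.

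Given $n$ and $\omega$ in the allowed range, write $L(m) = (\log m/\log\log m)^{d-2}$ and set $k$ of order $n/(\omega L(n))$. Because $\omega \le n^{1-\varepsilon}$, this $k$ is polynomial in $n$, so $L(k) = \Theta(L(n))$. Applying Theorem \ref{thm:piercing} with parameter $k$ produces a family $\mathcal{B}_0$ with $\nu(\mathcal{B}_0) \le k$ and $\tau(\mathcal{B}_0) \ge \Omega(k L(k))$. I will need two additional structural features of this base construction: (i) its maximum depth satisfies $\omega_0 = \Theta(\log k/\log\log k)$; and (ii) the total number of boxes satisfies $|\mathcal{B}_0| = \Theta(\omega_0 \cdot k L(k))$. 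I expect both to fall out of the layered nature of the Theorem \ref{thm:piercing} construction, where the extremal piercing number is achieved by a \emph{saturated} clique structure: each piercing point meets $\Theta(\omega_0)$ boxes, so the family is nearly as large as $\omega_0 \tau$.

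With these properties in hand, I replace each box of $\mathcal{B}_0$ by $m = \lfloor \omega/\omega_0 \rfloor$ identical copies to form a family $\mathcal{B}$. Duplicating a box preserves the independence number, so $\nu(\mathcal{B}) = \nu(\mathcal{B}_0) \le k$, while it multiplies the maximum depth by $m$, giving $\omega(G(\mathcal{B})) = m\omega_0 \le \omega$. The lower bound on $\omega$ in the hypothesis ensures $m \ge 1$. The total size is $|\mathcal{B}| = m|\mathcal{B}_0| = \Theta(\omega k L(n)) = \Theta(n)$, and a small adjustment of constants (or dropping excess copies) makes this exactly $n$. The chromatic inequality then gives
$\chi(G(\mathcal{B})) \ge |\mathcal{B}|/\nu(\mathcal{B}) \ge n/k = \Omega(\omega L(n))$,
which is the desired bound.

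The main technical difficulty lies in extracting the structural claims (i) and (ii) from the base construction; these do not follow from the statement of Theorem \ref{thm:piercing} alone but must be read off from the actual construction. The lower bound $\omega \ge c\log n/\log\log n$ in the hypothesis is precisely the analytical shadow of (i): without $\omega \ge \omega_0$ the blow-up step is vacuous, and $\omega_0$ inherently grows like $\log k/\log\log k$ for the construction to achieve its piercing lower bound.
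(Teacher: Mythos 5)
Your proposal is correct and is essentially the paper's argument: the paper likewise duplicates each box of a base construction $\approx \omega/\omega_0$ times and lower-bounds $\chi$ by $n/\alpha$. The structural facts (i) and (ii) that you flag as the remaining difficulty are exactly the content of Theorem \ref{thm:main} (a family of $N$ boxes with clique number $O(\log N/\log\log N)$ and independence number $O(N(\frac{\log N}{\log\log N})^{-(d-1)})$, so $N=\Theta(\omega_0\cdot kL(k))$ when $\nu\le k$), which is the common source of Theorems \ref{thm:piercing} and \ref{thm:coloring}; the paper simply invokes that theorem directly rather than routing through the piercing statement.
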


Unfortunately, Theorem \ref{thm:coloring} does not provide lower bounds in case $\omega(G)$ is a constant, the most interesting of which is when $\omega(G)=2$. It remains open whether there exists triangle-free intersection graphs of $n$ boxes in $\mathbb{R}^{d}$ of chromatic number growing much faster than $\log\log n$.

\section{The main theorem}
In this section, we state our main technical result, and show how to deduce Theorems \ref{thm:piercing}, \ref{thm:Ramsey}, and \ref{thm:coloring} from it. We omit the use of floors and ceilings whenever they are not crucial. 

\begin{theorem}\label{thm:main}
Let $d\geq 3$, then there exists $c_1,c_2\geq 0$ such that the following holds. For every positive integer $n$, there exists a family of $n$ boxes in $\mathbb{R}^d$ such that its intersection graph has clique number at most $c_1\frac{\log n}{\log\log n}$ and independence number at most $c_2 n (\frac{\log n}{\log\log n})^{-(d-1)}$
\end{theorem}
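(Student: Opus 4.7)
Set $k = \lfloor c_1 \log n / \log\log n \rfloor$ for a small constant $c_1 > 0$; the goal is to build a family of $n$ boxes in $\mathbb{R}^d$ with clique number at most $O(k)$ and independence number at most $O(n/k^{d-1})$. My plan is to construct the family inductively on the dimension, starting from a trivial $2$-dimensional base and adding one dimension at a time. Each step will multiply the number of boxes by $k$, increase the clique number only by a constant factor, and leave the independence number unchanged, so that after $d-2$ steps the ratio $\alpha / n$ has dropped by a factor of $k^{d-2}$ compared with the base.

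The base case ($t=2$) is the disjoint union of $n_2 := n/k^{d-2}$ cliques of size $k$, realized by placing $n_2$ far-apart groups of $k$ nested rectangles in the plane; this gives $\omega = k$ and $\alpha = n_2/k = n/k^{d-1}$. For the inductive step, given a family $\mathcal{B}_t$ in $\mathbb{R}^t$ with $n_t$ boxes, $\omega(\mathcal{B}_t) \leq O(k)$, and $\alpha(\mathcal{B}_t) \leq n/k^{d-1}$, I build $\mathcal{B}_{t+1}$ in $\mathbb{R}^{t+1}$ by taking $k$ shifted copies $\mathcal{C}_1,\ldots,\mathcal{C}_k$ of $\mathcal{B}_t$ along the new axis. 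Each copy $\mathcal{C}_i$ lives in a slab $\mathbb{R}^t \times I_i$, where the $I_i$'s form a staircase so that $I_i \cap I_j \neq \emptyset$ only when $|i-j| \leq 1$. Within $\mathcal{C}_i$, all boxes are additionally translated by a shift $s_i$ in one of the old coordinates; the $s_i$'s are carefully chosen so that cross-intersections between consecutive copies create a rigid intersection pattern.

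The clique bound is immediate from the staircase: any clique of $\mathcal{B}_{t+1}$ projects to a clique in the interval graph of the $I_i$'s, which has size at most $2$, so every clique is supported in at most two consecutive copies and has size at most $2\omega(\mathcal{B}_t) = O(k)$. The crux is the independence bound. Naively, an independent set $\mathcal{I} \subseteq \mathcal{B}_{t+1}$ can include $\alpha(\mathcal{B}_t)$ boxes from each copy, giving $|\mathcal{I}| \leq k \cdot \alpha(\mathcal{B}_t)$, which is a factor of $k$ too large. The gain must come from the shifts $s_i$: they must be designed so that for each consecutive pair $(i,i+1)$, the projections of $\mathcal{I} \cap \mathcal{C}_i$ and $\mathcal{I} \cap \mathcal{C}_{i+1}$ to $\mathbb{R}^t$ together form an independent set of $\mathcal{B}_t$, i.e.\ $|\mathcal{I} \cap \mathcal{C}_i| + |\mathcal{I} \cap \mathcal{C}_{i+1}| \leq \alpha(\mathcal{B}_t)$. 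Establishing this property for \emph{every} independent set $\mathcal{I}$ simultaneously is the main obstacle; I expect it to be proved either by a random choice of shifts combined with a union bound over the candidate large independent sets, or by an explicit algebraic/pseudorandom design of the shifts (for instance from a perfect difference family) that rules out aligned intersections. Once established, summing over consecutive pairs yields $|\mathcal{I}| = O(\alpha(\mathcal{B}_t))$, closing the induction.
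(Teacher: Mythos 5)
Your inductive scheme has a structural flaw that no choice of shifts can repair. In the staircase arrangement the slabs $I_i$ and $I_j$ are disjoint whenever $|i-j|\geq 2$, so the copies $\mathcal{C}_1,\mathcal{C}_3,\mathcal{C}_5,\dots$ are pairwise non-interacting. Taking a maximum independent set inside each odd-indexed copy therefore produces an independent set of $\mathcal{B}_{t+1}$ of size $\lceil k/2\rceil\,\alpha(\mathcal{B}_t)$, regardless of how the shifts $s_i$ are chosen. Since $n_{t+1}=k\,n_t$, the ratio $n/\alpha$ improves by at most a factor of $2$ per dimension, not by a factor of $k$, and the induction cannot close. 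The same problem shows up in your final summation even on your own terms: the pairwise bounds $|\mathcal{I}\cap\mathcal{C}_i|+|\mathcal{I}\cap\mathcal{C}_{i+1}|\leq\alpha(\mathcal{B}_t)$ over the $k-1$ consecutive pairs only give $2|\mathcal{I}|\leq (k+1)\,\alpha(\mathcal{B}_t)$, i.e.\ $|\mathcal{I}|=O(k\,\alpha(\mathcal{B}_t))$, not $O(\alpha(\mathcal{B}_t))$. And the pairwise property itself --- which would require every maximum independent set of one copy to dominate the entire neighbouring copy --- is exactly the content of the theorem and is left unproved. The tension you are fighting is real: if only $O(1)$ copies overlap at any point, the independence number must grow with the number of copies; if all copies overlap, the clique number blows up.

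The paper resolves this tension differently. It starts with the family of \emph{all} blocks $\prod_i[s^{\mathbf{t}(i)}\mathbf{p}(i),s^{\mathbf{t}(i)}(\mathbf{p}(i)+1))$ of fixed volume $m=s^k$ in $[0,m]^d$, over all $|T|=\binom{k+d-1}{d-1}\approx k^{d-1}$ shape vectors $\mathbf{t}$. Here every point lies in $|T|$ blocks, so the clique number is $|T|$ (too large), but the independence number is exactly $M=m^{d-1}$ out of $|T|M$ boxes, so $n/\alpha=|T|$. It then sparsifies randomly with probability $p\approx 1/|T|$: a Chernoff bound kills the cliques down to $O(k)$, and the real work is showing that the independence number of the sample stays $O(pM)$. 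That step uses a supersaturation lemma (any $(1+\varepsilon)M$ blocks contain a vertex of degree $\Omega(\varepsilon s/|T|^2)$) and the graph container method; equivalently, one can use that $G$ is an edge-disjoint union of few complete bipartite graphs, so it has few maximal independent sets. If you want to salvage a deterministic, shift-based construction, you would need all $k$ copies to mutually overlap and then find a different mechanism to suppress cliques --- which is essentially what the random sparsification plus containers accomplish.
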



\begin{proof}[Proof of Theorem \ref{thm:piercing}]
    We may assume that $k$ is sufficiently large with respect to $d$. Let $c_1,c_2$ be the constants provided by Theorem \ref{thm:main}, and let $n$ be the largest positive integer such that $k\geq c_2n(\frac{\log n}{\log\log n})^{-(d-1)}$. Then $n=\Theta(k(\frac{\log k}{\log\log k})^{d-1})$. Let $\mathcal{B}$ be a family of $n$ boxes in $\mathbb{R}^{d}$ whose intersection graph $G$ satisfies $\omega(G)\leq c_1\frac{\log n}{\log\log n}$ and $\alpha(G)\leq c_2n(\frac{\log n}{\log\log n})^{-(d-1)}\leq k$. Note that a family of boxes that can be pierced by a single point is a clique in $G$, so the piercing number of $\mathcal{B}$ is at least $$\frac{n}{\omega(G)}\geq \frac{n\log\log n}{c_1\log n}\geq \Omega\left(k\left(\frac{\log k}{\log\log k}\right)^{d-2}\right).$$ Hence, $\mathcal{B}$ satisfies the required conditions.
\end{proof}

\begin{proof}[Proof of Theorem \ref{thm:Ramsey}]
We may assume that $n$ is sufficiently large with respect to $d$, and let $c_1,c_2$ be the constants provided by Theorem \ref{thm:main}. Let $n_0=n^{1/2}(\frac{\log n}{\log\log n})^{d/2}$ and $s=\frac{n}{n_0}=n^{1/2}(\frac{\log n}{\log\log n})^{-d/2}$. Let $\mathcal{B}_0$ be a family of $n_0$ boxes in $\mathbb{R}^{d}$, whose intersection graph $G_0$ satisfies $\omega(G_0)\leq c_1\frac{\log n_0}{\log\log n_0}$ and $\alpha(G_0)\leq c_2 n_0 (\frac{\log n_0}{\log\log n_0})^{-(d-1)}$. Let $\mathcal{B}$ be the family of $n=sn_0$ boxes we get after taking each element of $\mathcal{B}$ with multiplicity $s$, and let $G$ be the intersection graph of $\mathcal{B}$. Then $v(G)=n$, 
$$\omega(G)=s\cdot \omega(G_0)=O\left(n^{1/2}\left(\frac{\log n}{\log\log n}\right)^{-(d-2)/2}\right)$$
and 
$$\alpha(G)=\alpha(G_0)=O\left(n^{1/2}\left(\frac{\log n}{\log\log n}\right)^{-(d-2)/2}\right),$$
finishing the proof.
\end{proof}

\begin{proof}[Proof of Theorem \ref{thm:coloring}] We may assume that $n$ is sufficiently large with respect to $d$ and $\varepsilon$, and let $c_1,c_2$ be the constants provided by Theorem \ref{thm:main}. Let $c=c_1$, $s=\frac{\omega\log\log n}{c_1\log n}$ (so then $s\geq 1$), and $n_0=\frac{n}{s}\geq n^{\varepsilon-o(1)}$.  Let $\mathcal{B}_0$ be a family of $n_0$ boxes in $\mathbb{R}^{d}$, whose intersection graph $G_0$ satisfies $\omega(G_0)\leq c_1\frac{\log n_0}{\log\log n_0}$ and 
$$\alpha(G_0)\leq c_2 n_0 \left(\frac{\log n_0}{\log\log n_0}\right)^{-(d-1)}\leq c_3 \frac{n}{s}\cdot \left(\frac{\log n}{\log\log n}\right)^{-(d-1)},$$
where $c_3>0$ only depends on $c_2$ and $\varepsilon$. Let $\mathcal{B}$ be the family of $n=sn_0$ boxes we get after taking each element of $\mathcal{B}$ with multiplicity $s$, and let $G$ be the intersection graph of $\mathcal{B}$. Then $\omega(G)=s\cdot \omega(G_0)\leq \omega$ and $\alpha(G)=\alpha(G_0)$. On the other hand, we have
$$\chi(G)\geq \frac{n}{\alpha(G)}\geq \frac{s}{c_3} \left(\frac{\log n}{\log\log n}\right)^{d-1}=\frac{w}{c_1c_3}\cdot \left(\frac{\log n}{\log\log n}\right)^{d-2},$$
so $c'=\frac{1}{c_1c_3}$ suffices.
\end{proof}

\section{The construction}

This section is devoted to the proof of Theorem \ref{thm:main}. We fix the following parameters, some of which will be specified later with respect to $n$: let $s$ and $k$ be positive integers, and let $m:=s^k$ and $M:=m^{d-1}$. We assume that $s$ and $k$ are sufficiently large with respect to $d$. Given $\textbf{t}\in \mathbb{N}^d$ and $\textbf{p}\in \mathbb{Z}^d$, let $B_{\textbf{t}}(\textbf{p})$  denote the $s^{\textbf{t}(1)}\times\dots\times s^{\textbf{t}(d)}$ sized (not closed) box  $$\prod_{i=1}^{d}\left[s^{\textbf{t}(i)}\cdot \mathbf{p}(i),s^{\textbf{t}(i)}\cdot (\mathbf{p}(i)+1)\right).$$ Call $B_{\textbf{t}}(\textbf{p})$ a \emph{$\textbf{t}$-block}, or simply a \emph{block}. Clearly, the \emph{$\textbf{t}$-blocks} partition $\mathbb{R}^d$ for every $\textbf{t}\in \mathbb{N}^d$. Furthermore, an important observation is that for any $i\in [d]$, the projections of any two blocks onto the $i$-th axis are either disjoint, or one contains the other. In the special case $s=2$ and $d=2$, the  blocks we defined are sometimes referred to as \emph{dyadic} or  \emph{canonical rectangles}, and they play a key role in the celebrated result of Pach and Tardos \cite{PT12} about epsilon-nets, for example. For us, it will be crucial that $s$ can grow slowly with~$n$.

Let $$T=\left\{\textbf{t}\in \mathbb{N}^d: \sum_{i=1}^d \mathbf{t}(i)=k\right\},$$
and let $\mathcal{B}$ be the family of all blocks of volume $m=s^{k}$ contained in $[0,m]^d$. In other words, $\mathcal{B}$ is the family of all blocks $B_{\textbf{t}}(\textbf{p})$, where $\textbf{t}\in T$ and $\textbf{p}(i)\in \{0,\dots,s^{k-\mathbf{t}(i)}-1\}$ for every $i\in [d]$. Note that $|T|=\binom{k+d-1}{d-1}$ and $|\mathcal{B}|=|T|\cdot m^{d-1}=|T|\cdot M$. 

Let $G$ be the intersection graph of $\mathcal{B}$. While blocks are not closed boxes, we can slightly shrink them  and take their closures without changing the intersection pattern, so $G$ is also an intersection graph of closed boxes. Clearly, we have $\omega(G)=|T|$, as each point of $[0,m]^d$ is contained in a unique $\mathbf{t}$-block for every $\mathbf{t}\in T$. Also, $\alpha(G)=M$, as the $\mathbf{t}$-blocks form an independent set of size $M$ for every  $\mathbf{t}\in T$, and any family of $M+1$ blocks in $\mathcal{B}$ have total volume more than $m^{d}$. 

Our aim is to show that a random sample of the vertices of $G$ with some appropriate probability $p$ induces a subgraph $H$ satisfying the requirements of Theorem \ref{thm:main} with high probability. The clique number of a random sample is easy to analyze, as a clique is a set of blocks containing a given point. On the other hand, independent sets are harder to control, and we will use the celebrated \emph{graph container method} to do so. This method was first introduced by Kleitman and Winston \cite{KW82}, and later formalized by Sapozhenko \cite{S05}. In order to make our paper self contained, we do not assume familiarity with these publications, and the container method in general. Roughly, we show that there is a small collection $\mathcal{C}$ of subsets of $V(G)$, each of size close to $\alpha(G)$, such that every independent set of $G$ is covered by an element of $\mathcal{C}$. The members of the collection $\mathcal{C}$ are called \emph{containers}. Such a collection exists if $G$ admits a \emph{supersaturation} result, i.e. every subset of the vertices slightly larger than $\alpha(G)$ induces a subgraph of large maximum degree. We prove this supersaturation result in the next lemma.

\begin{lemma}\label{lemma:saturation}
 Let  $\varepsilon>0$, and let $S\subset \mathcal{B}$ be of size at least $(1+\varepsilon)M$. Then the maximum degree of $G[S]$ is at least $\frac{\varepsilon s}{|T|^2}$. 
\end{lemma}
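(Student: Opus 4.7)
The plan is to lower bound the number of edges of $G[S]$ via a second moment argument applied to a uniformly random point, and then pass from the average degree to the maximum degree. Sample $x \in [0,m]^d$ uniformly at random, and let $X$ count the blocks $B \in S$ containing $x$. For each type $\mathbf{t} \in T$, the $\mathbf{t}$-blocks partition $[0,m]^d$, so writing $S_{\mathbf{t}}$ for the set of $\mathbf{t}$-blocks in $S$, the contribution of type $\mathbf{t}$ to $X$ is a Bernoulli variable with mean $|S_{\mathbf{t}}|/M$. Summing gives $\mathbb{E}[X] = |S|/M \geq 1+\varepsilon$, and Jensen's inequality yields $\mathbb{E}[X(X-1)] \geq (\mathbb{E}[X])^2 - \mathbb{E}[X] \geq \varepsilon(1+\varepsilon)$.

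The key step is to convert this factorial moment into a bound on edges of $G[S]$ via
$$\mathbb{E}[X(X-1)] = \frac{1}{m^d}\sum_{\substack{B,B'\in S\\ B\neq B'}}|B\cap B'|,$$
which requires a uniform upper bound on $|B\cap B'|$ for distinct $B,B'\in \mathcal{B}$. The observation is that the intersection of a $\mathbf{t}$-block and a $\mathbf{t}'$-block is either empty or itself a block of type $\mathbf{u}$ with $\mathbf{u}(i)=\min(\mathbf{t}(i),\mathbf{t}'(i))$, of volume $s^{\sum_i \min(\mathbf{t}(i),\mathbf{t}'(i))}$. Two distinct blocks of the same type are disjoint; for $\mathbf{t}\neq \mathbf{t}'$, the identities $\sum_i \mathbf{t}(i)=\sum_i \mathbf{t}'(i)=k$ combined with integrality force $\sum_i\min(\mathbf{t}(i),\mathbf{t}'(i))\leq k-1$, hence $|B\cap B'|\leq m/s$. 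Plugging this back in, the number of ordered intersecting pairs in $S$ is at least $\varepsilon(1+\varepsilon)\,sM$.

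Since this ordered count equals $\sum_{B\in S}\deg_{G[S]}(B)$ and $|S|\leq |\mathcal{B}|=|T|\cdot M$, the average degree of $G[S]$ is at least $\varepsilon(1+\varepsilon)s/|T|$, which dominates the stated bound $\varepsilon s/|T|^2$ with an entire factor of $|T|$ to spare. The crux of the argument is the volume bound $|B\cap B'|\leq m/s$: this is where the design choice to restrict $\mathcal{B}$ to blocks of a common volume $m=s^k$ is genuinely used, forcing any two distinct blocks to have incompatible shapes and thus an intersection strictly smaller by a factor of $s$. Everything else is routine bookkeeping of first and second moments.
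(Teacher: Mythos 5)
Your proof is correct, and it takes a genuinely different route from the paper. The paper argues combinatorially: for each pair of types $\mathbf{t},\mathbf{u}$ it groups the intersecting $\mathbf{t}$- and $\mathbf{u}$-blocks into complete bipartite graphs indexed by $\mathbf{w}$-blocks (with $\mathbf{w}=\max(\mathbf{t},\mathbf{u})$ coordinatewise), deletes the smaller side of each, and compares what remains against the independence number $M$; the count of at most $M/s$ such $\mathbf{w}$-blocks per pair of types is where the volume gain by a factor of $s$ enters. Your second-moment argument uses exactly the same geometric fact in dual form --- two distinct blocks of common volume $m=s^k$ intersect in volume at most $m/s$ --- but converts it directly into a lower bound on the number of edges, and hence on the average degree, of $G[S]$. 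Two small points in your favor: your argument is shorter and avoids the case analysis of which side of each bipartite graph to delete; and it yields maximum degree at least $\varepsilon(1+\varepsilon)s/|T|$, which is precisely the strengthening the paper mentions after the lemma as requiring ``a slightly more involved calculation.'' What the paper's approach buys instead is the structural byproduct, exploited in the concluding remarks, that $G$ is an edge-disjoint union of few complete bipartite graphs, which supports the container-free treatment of independent sets. One minor point worth making explicit in a final write-up: since the blocks are products of half-open intervals whose projections are nested or disjoint, two blocks have nonempty intersection if and only if they have an intersection of positive volume, so your count of positive-volume ordered pairs really is $\sum_{B\in S}\deg_{G[S]}(B)$.
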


\begin{proof}
 Let $\Delta$ be the maximum degree of $G[S]$. For a block $C$ and $\mathbf{t}\in T$, let $S_{\mathbf{t}}[C]$ denote the set of $\mathbf{t}$-blocks among the elements of $S$  that are contained in $C$.
 
 Fix a pair of distinct $\textbf{t},\textbf{u}\in T$, and define $\mathbf{w}\in \mathbb{N}^d$ such that $\mathbf{w}(i)=\max\{\mathbf{t}(i),\mathbf{u}(i)\}$.  Observe that if $B$ is a $\mathbf{t}$-block and $B'$ a is $\mathbf{u}$-block, then  $B$ and $B'$ have a nonempty intersection if and only if  $B$ and $B'$ are contained in the same $\mathbf{w}$-block. Indeed, for $i\in [d]$ let $B_i$ and $B_i'$ be the projections of $B$ and $B'$ onto the $i$-th axis, respectively. Then $B$ and $B'$ intersect if and only if $B_i\subset B_i'$ or $B_i'\subset B_i$ for every $i\in [d]$ (indeed, recall that if $B_i\cap B_i'\neq \emptyset$, then one must contain the other). Thus, if $B\cap B'\neq \emptyset$, then $(B_1\cup B_1')\times \dots \times (B_d\cup B_d')$ is a $\mathbf{w}$-block containing $B$ and $B'$. On the other hand, if $C$ is a $\mathbf{w}$-block containing $B$ and $B'$, and $C_i$ is the projection of $C$ onto the $i$-th axis, then either $C_i=B_i$ or $C_i=B_i'$, showing that $B_i\subset B'_i$ or $B'_i\subset B_i$.
 
 Define the family $\mathcal{M}(\textbf{t},\textbf{u})\subset S$ as follows. By the previous, for every $\mathbf{w}$-block $C$,  every element of $S_{\mathbf{t}}[C]$ intersects every element of $S_{\mathbf{u}}[C]$. Therefore, we have either $|S_{\mathbf{t}}[C]|\leq \Delta$ or $|S_{\mathbf{u}}[C]|\leq \Delta$. In the first case, we add every element of $S_{\mathbf{t}}[C]$ to $\mathcal{M}(\textbf{t},\textbf{u})$, in the second case we add every element of $S_{\mathbf{u}}[C]$ to $\mathcal{M}(\textbf{t},\textbf{u})$ (if both cases hold, we only add the elements of $S_{\mathbf{t}}[C]$, say). The total number of $\mathbf{w}$-blocks in $[0,m]^d$ is $$\frac{m^d}{s^{\mathbf{w}(1)+\dots+\mathbf{w}(d)}}\leq \frac{m^{d}}{s^{k+1}}=\frac{M}{s},$$ where the first inequality holds by noting that $\mathbf{w}(1)+\dots+\mathbf{w}(d)>\mathbf{t}(1)+\dots+\mathbf{t}(d)=k$. Hence, $|\mathcal{M}(\textbf{t},\textbf{u})|\leq \Delta\cdot\frac{M}{s}$. Moreover, $S\setminus \mathcal{M}(\textbf{t},\textbf{u})$ contains no $\textbf{t}$-block intersecting a $\textbf{u}$-block, because if a $\mathbf{t}$-block intersects a $\mathbf{u}$-block, then they are contained in the same $\mathbf{w}$-block.

 Let $S'=S\setminus\bigcup_{\mathbf{t},\mathbf{u}\in T,\mathbf{t}\neq\mathbf{u}}\mathcal{M}(\mathbf{t},\mathbf{u})$. Then $|S'|\geq |S|-|T|^2 \Delta \frac{M}{s}$. Crucially, any two blocks in $S'$ are disjoint, so $|S'|\leq M$. Therefore, comparing the lower and upper bound on $|S'|$, we arrive to the inequality $$M\geq |S'|\geq |S|-|T|^2 \Delta \frac{M}{s} \geq (1+\varepsilon)M- |T|^2 \Delta \frac{M}{s}.$$
 Comparing the left and right sides, we get the desired inequality $\Delta\geq \frac{\varepsilon s}{|T|^2}$.
\end{proof}

With a slightly more involved calculation, the lower bound on the maximum degree can be improved to $\Omega(\frac{\epsilon s}{|T|})$. However, this improvement has no effect on the bound in Theorem \ref{thm:main}. Moreover, it is worth noting that in case $d=2$, one has the much stronger saturation result that any set of size $M+1$ induces a subgraph of maximum degree $\Omega(s)$. This no longer holds for $d\geq 3$, there exists a set of $M+1$ boxes that induces a subgraph of maximum degree 1. We leave these claims as exercises, and they will not be used later.

Now we are ready to state our container lemma, whose proof should be mostly standard for anyone familiar with the container method.

\begin{lemma}\label{lemma:containers}
    Let $s\geq |T|^3$. Then there exists a collection $\mathcal{C}$ of subsets of $\mathcal{B}$ such that
    \begin{enumerate}
        \item[(1)] every $C\in\mathcal{C}$ satisfies $|C|\leq 3M$,
        \item[(2)] $|\mathcal{C}|\leq e^{(\log s) M|T|^3/s}$,
        \item[(3)] every independent set of $G$ is contained in some element of $\mathcal{C}$.
    \end{enumerate}
\end{lemma}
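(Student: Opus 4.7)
The plan is to run the standard Kleitman--Winston container algorithm, feeding Lemma \ref{lemma:saturation} into it as the supersaturation estimate. Fix an arbitrary total ordering on $\mathcal{B}$. Given an independent set $I$, I maintain an \emph{available set} $A \subseteq \mathcal{B}$, initialized to $\mathcal{B}$, and a \emph{fingerprint} $F \subseteq I$, initialized to $\emptyset$. As long as $|A| > 2M$, I pick the vertex $v \in A$ of maximum degree in $G[A]$, breaking ties by the fixed ordering; if $v \in I$, I add $v$ to $F$ and delete both $v$ and all of its $G[A]$-neighbors from $A$, while if $v \notin I$, I simply delete $v$ from $A$. When the loop halts, I set $C(F) := F \cup A$ and let $\mathcal{C}$ be the collection of all sets $C(F)$ produced this way as $I$ ranges over the independent sets of $G$.

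Three standard observations then handle items (1)--(3). First, the entire trajectory $(A, v, \text{action})$ is determined by $G$ together with $F$, because at each step the branch taken depends only on whether the current $v$ lies in $F$; hence $C(F)$ is genuinely a function of $F$, and $|\mathcal{C}|$ is bounded by the number of possible fingerprints. Second, every $v \in I \setminus F$ survives in $A$ until the loop terminates: it cannot be deleted in a ``$v \notin I$'' branch, and it cannot be deleted as a neighbor of some fingerprint vertex $u \in F \subseteq I$ because $I$ is independent. Hence $I \subseteq F \cup A_{\text{final}} = C(F)$, giving (3). Third, whenever $|A| > 2M$, Lemma \ref{lemma:saturation} with $\varepsilon = 1$ guarantees that $G[A]$ has maximum degree at least $s/|T|^2$, so each addition to $F$ eliminates at least $s/|T|^2$ vertices from $A$; since we start with $|\mathcal{B}| = |T|M$ vertices, this forces $|F| \leq |T|^3 M / s$, and the assumption $s \geq |T|^3$ then gives $|F| \leq M$, whence $|C(F)| \leq |F| + 2M \leq 3M$, establishing (1).

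It remains to verify the counting bound (2). Since $C(F)$ is determined by $F$, it is enough to count fingerprints, which gives $|\mathcal{C}| \leq \sum_{t \leq |T|^3 M/s} \binom{|T|M}{t}$. Estimating each binomial via $\binom{n}{t} \leq (en/t)^t$ with $n = |T|M$ and the upper limit $t = |T|^3 M/s$, each term is at most $(es/|T|^2)^{|T|^3 M/s}$, and after taking logarithms this sits comfortably below $e^{(\log s) M|T|^3/s}$ provided $s$ is sufficiently large, which is granted by $s \geq |T|^3$ combined with the global assumption that $s$ and $k$ are large with respect to $d$. The whole argument is essentially a template, so the only real bookkeeping subtlety is the choice of threshold $2M$ rather than $3M$: this keeps Lemma \ref{lemma:saturation} usable with $\varepsilon \geq 1$ while still leaving enough slack inside the $3M$ container budget to absorb the at-most-$M$ fingerprint once $s \geq |T|^3$.
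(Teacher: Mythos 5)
Your proposal is correct and follows essentially the same argument as the paper: the same Kleitman--Winston algorithm with threshold $2M$, the same invocation of Lemma \ref{lemma:saturation} with $\varepsilon=1$ to bound the fingerprint size by $M|T|^3/s$, and the same binomial-coefficient count of fingerprints (your summation over fingerprint sizes is in fact marginally more careful than the paper's single binomial).
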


\begin{proof}
Let $<$ be an arbitrary total ordering of the elements of $\mathcal{B}$. For a graph $H$ and vertex $v\in V(H)$, $N_{H}(v)=\{w\in V(H):vw\in E(H)\}$ denotes the \emph{neighborhood} of $v$ in $H$.

Fix an independent set $I$ of $G$. We construct a \emph{fingerprint} $S$ and a set $f(S)$ (depending only on $S$) for $I$ with the help of the following algorithm.

Let $S_0=\emptyset$ and $G_0=G$. If $S_i$ and $G_i$ are already defined, we define $S_{i+1}$ and $G_{i+1}$ in the following manner. Let $v$ be the first vertex (with respect to $<$) of maximum degree in $G_{i}$.  
\begin{itemize}
\item If $|V(G_{i})|\leq 2M$, then stop, and set $S=S_{i}$ and $f(S)=V(G_{i})$.
\item  Otherwise, if $v\not\in I$, then set $S_{i+1}=S_i$, remove $v$ from $G_i$, and let the resulting graph be $G_{i+1}$. 
\item If $v\in I$, then set $S_{i+1}=S_i\cup\{v\}$, and remove $\{v\}\cup N_{G_i}(v)$ from $G_i$, let $G_{i+1}$ be the resulting graph.
\end{itemize}
Let us analyze this algorithm. At each step, the size of $G_i$ decreases, so the algorithm stops after a finite number of steps. 

Firstly, let us argue that $f(S)$ indeed only depends on $S$. In particular, we show that if $I_1,I_2$ are independent sets, for which the algorithm produces the same fingerprint $S$, then the algorithm produces the same sets $S_i$ and graphs $G_i$ at every step  for $I_1$ and $I_2$. Otherwise, let $i$ be the last step, where $G_{j}$ and $S_{j}$ agree for $I_1$ and $I_2$ for every $j\leq i$. Then the first vertex $v$ of maximum degree in $G_i$ is contained in exactly one of $I_1$ and $I_2$, say $v\in I_1\setminus I_2$. But then the algorithm for $I_1$ sets $S_{i+1}=S_i\cup\{v\}$, so $v\in S$. On the other hand, the algorithm for $I_2$ sets $S_{i+1}=S_i$, removes $v$ from $G_i$, and never processes this vertex again, so $v\not\in S$, contradiction.

Secondly, observe that as $I$ is an independent set, we have $I\subset S_i\cup V(G_i)$ for every $i$, so in particular $I\subset S\cup f(S)$. 

Finally, we have $|S|\leq \frac{M|T|^3}{s}$. Indeed, in case we added a vertex $v$ to $S_i$ to get $S_{i+1}$, we removed at least $1+|N_{G_i}(v)|$ vertices from $G_i$ to get $G_{i+1}$. But $v$ is a vertex of maximum degree in $G_i$, and $G_i$ has at least $2M$ vertices, so applying Lemma \ref{lemma:saturation} with $\varepsilon=1$, we get $1+|N_{G_i}(v)|> \frac{s}{|T|^2}$. Therefore, as we started with $|\mathcal{B}|$ vertices, there are at most $|\mathcal{B}|\frac{|T|^2}{s}=\frac{M|T|^3}{s}$ indices $i$ for which $|S_{i+1}|= |S_i|+1$. In particular, we have $|S\cup f(S)|\leq \frac{M|T|^3}{s}+2M\leq 3M$

Let $\mathcal{C}$ be the collection of all the set $S\cup f(S)$, where $S$ is the fingerprint of some independent set $I$. Then (1) and (3) are clearly satisfied. Also, as each fingerprint has size at most $\frac{M|T|^3}{s}$, we have 
$$|\mathcal{C}|\leq \binom{|\mathcal{B}|}{M|T|^3/s}\leq \left(\frac{es}{|T|^2}\right)^{M|T|^3/s}\leq e^{(\log s) M|T|^3/s}.$$
Here, the second inequality holds by the general inequality $\binom{a}{b}\leq (\frac{ea}{b})^{b}$ and substituting $|\mathcal{B}|=|T| M$. This finishes proof.
\end{proof}

Let $p\in (0,1)$ be a parameter specified later, and sample the elements of $\mathcal{B}$ independently with probability $p$. Let the resulting sample be $X$, and let $H=G[X]$. In the following claims, we collect the important properties of $H$. During our arguments, we use standard concentration arguments.

\begin{lemma}[Multiplicative Chernoff bound]\label{lemma:chernoff}
 Let $X$ be the sum of independent indicator random variables. If $\lambda\geq 2\mathbb{E}(X)$, then
 $\mathbb{P}(X\geq \lambda)\leq e^{-\lambda/6}$. Also, 
 $\mathbb{P}(X\leq \mathbb{E}(X)/2)\leq e^{-\mathbb{E}(X)/8}.$
\end{lemma}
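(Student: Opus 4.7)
The plan is the classical Chernoff moment generating function argument. Writing $X=\sum_{i=1}^{n}X_i$ with the $X_i$ independent $\{0,1\}$-valued and $p_i=\mathbb{E}(X_i)$, the key pointwise inequality is
$$\mathbb{E}[e^{tX_i}]=1+p_i(e^t-1)\leq \exp\bigl(p_i(e^t-1)\bigr)$$
for every real $t$, which by independence multiplies to $\mathbb{E}[e^{tX}]\leq \exp(\mu(e^t-1))$, where $\mu=\mathbb{E}(X)$. I would then apply Markov's inequality in the forms $\mathbb{P}(X\geq \lambda)\leq e^{-t\lambda}\,\mathbb{E}[e^{tX}]$ with $t>0$ for the upper tail, and $\mathbb{P}(X\leq \lambda)\leq e^{t\lambda}\,\mathbb{E}[e^{-tX}]$ with $t>0$ for the lower tail, choosing an appropriate $t$ in each case.

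For the upper tail I would take $t=\ln 2$, which gives
$$\mathbb{P}(X\geq \lambda)\leq \exp(\mu(e^{\ln 2}-1)-\lambda\ln 2)=\exp(\mu-\lambda\ln 2).$$
Using the hypothesis $\mu\leq \lambda/2$, this is at most $\exp(-\lambda(\ln 2-\tfrac{1}{2}))$, and since $\ln 2-\tfrac{1}{2}>\tfrac{1}{6}$ numerically, we obtain $\mathbb{P}(X\geq \lambda)\leq e^{-\lambda/6}$.

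For the lower tail, the analogous argument with the weight $e^{-tX}$ and $t=\ln 2$ yields
$$\mathbb{P}(X\leq \mu/2)\leq \exp\bigl(\mu(e^{-\ln 2}-1)+\tfrac{\mu\ln 2}{2}\bigr)=\exp\bigl(-\tfrac{\mu(1-\ln 2)}{2}\bigr),$$
and since $(1-\ln 2)/2>\tfrac{1}{8}$, this is bounded by $e^{-\mu/8}$.

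There is really no obstacle worth highlighting here: this is a textbook application of Chernoff's method. The only minor check is numerical, namely that the elementary constants $\ln 2-\tfrac{1}{2}$ and $(1-\ln 2)/2$ exceed $\tfrac{1}{6}$ and $\tfrac{1}{8}$ respectively; a slightly different choice of $t$ (or a finer optimization in $t$ depending on $\lambda/\mu$) would give sharper exponents, but the simple choice $t=\pm\ln 2$ already produces the stated bounds.
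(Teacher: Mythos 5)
Your proof is correct. The paper states Lemma \ref{lemma:chernoff} without proof, treating it as a standard concentration fact, and your argument is exactly the classical moment-generating-function derivation that underlies it; the choice $t=\ln 2$ and the numerical checks $\ln 2-\tfrac12>\tfrac16$ and $(1-\ln 2)/2>\tfrac18$ are all valid, so this is a complete and correct proof of precisely the stated bounds.
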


First, we bound the size of $X$.

\begin{claim}\label{claim:size}
If $p\geq \frac{1}{s}$, then $|X|\geq \frac{p|\mathcal{B}|}{2}$  with probability at least $3/4$.
\end{claim}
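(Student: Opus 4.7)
The plan is to apply the lower-tail multiplicative Chernoff bound (Lemma \ref{lemma:chernoff}) directly to $|X|$. Since $X$ is obtained by including each element of $\mathcal{B}$ independently with probability $p$, the random variable $|X|$ is a sum of $|\mathcal{B}|$ independent indicators, with expectation $\mathbb{E}(|X|) = p|\mathcal{B}|$.

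First I would record the (huge) size of the underlying ground set: $|\mathcal{B}| = |T|\cdot M$, where $|T|=\binom{k+d-1}{d-1}$ and $M=s^{k(d-1)}$. Using the hypothesis $p\ge 1/s$, this gives
\[
\mathbb{E}(|X|) \;=\; p|\mathcal{B}| \;\ge\; \frac{|T|\cdot M}{s} \;=\; \frac{|T|\cdot s^{k(d-1)}}{s}.
\]
Since we assumed throughout the section that $s$ and $k$ are sufficiently large with respect to $d$ (and in Lemma \ref{lemma:containers} we will in any case enforce $s\ge |T|^3$), this quantity is far larger than any absolute constant; in particular, $\mathbb{E}(|X|) \geq 8\ln 4$ is trivial.

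Next I would invoke the second part of Lemma \ref{lemma:chernoff}:
\[
\mathbb{P}\Bigl(|X| \le \tfrac{p|\mathcal{B}|}{2}\Bigr) \;=\; \mathbb{P}\Bigl(|X|\le \tfrac{\mathbb{E}(|X|)}{2}\Bigr) \;\le\; e^{-\mathbb{E}(|X|)/8} \;\le\; e^{-p|\mathcal{B}|/8} \;\le\; \tfrac14,
\]
where the last inequality uses the lower bound on $p|\mathcal{B}|$ from the previous step. Taking complements yields $|X| \ge p|\mathcal{B}|/2$ with probability at least $3/4$, as claimed.

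There is essentially no obstacle here: the only thing to be careful about is making sure the ``sufficiently large $s,k$'' assumption actually implies $p|\mathcal{B}|\ge 8\ln 4$, but this is immediate because $p|\mathcal{B}|\ge |\mathcal{B}|/s$ is polynomially large in $s$ and $k$. The genuine difficulties of the construction will only appear in the subsequent claims where one has to control cliques and independent sets of $H$ via the container lemma.
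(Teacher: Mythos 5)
Your proof is correct and is essentially identical to the paper's: both apply the lower-tail multiplicative Chernoff bound to $|X|$ with $\mathbb{E}(|X|)=p|\mathcal{B}|\ge |\mathcal{B}|/s$, which is polynomially large in $s$ and $k$, so the failure probability $e^{-p|\mathcal{B}|/8}$ is below $1/4$. No issues.
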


\begin{proof}
Follows simply from the multiplicative Chernoff bound:
$$\mathbb{P}\left(|X|<\frac{p|\mathcal{B}|}{2}\right)\leq e^{-p|\mathcal{B}|/8}<\frac{1}{4},$$
where in the last inequality we used that $p|\mathcal{B}|>s^{kd-1}$ is sufficiently large.
\end{proof}

Next, we bound the independence number of $H$. We argue that as the number of containers covering the independent sets of  $G$ is small, each of them must shrink to approximately $p$ proportion after sampling. As the containers have size $O(\alpha(G))$, this ensures that $\alpha(H)=O(p\alpha(G))$.

\begin{claim}\label{claim:independent_set}
If $s\geq |T|^{3}$ and $p>\frac{2(\log s)|T|^3}{s}$, then $\alpha(H)\leq 6pM$ with probability at least $3/4$.
\end{claim}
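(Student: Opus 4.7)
The plan is to combine the container lemma (Lemma \ref{lemma:containers}) with a Chernoff bound and a union bound. Every independent set of $H=G[X]$ is also an independent set of $G$, hence contained in some $C\in\mathcal{C}$, and therefore in $C\cap X$. Consequently $\alpha(H)\leq \max_{C\in\mathcal{C}}|C\cap X|$, so it suffices to show that, with probability at least $3/4$, every $C\in\mathcal{C}$ satisfies $|C\cap X|\leq 6pM$.

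For a fixed $C\in\mathcal{C}$, the random variable $|C\cap X|$ is a sum of independent indicators with expectation $p|C|\leq 3pM$ by part (1) of Lemma \ref{lemma:containers}. Taking $\lambda=6pM\geq 2\mathbb{E}|C\cap X|$ in the multiplicative Chernoff bound (Lemma \ref{lemma:chernoff}) gives
$$\mathbb{P}\bigl(|C\cap X|\geq 6pM\bigr)\leq e^{-pM}.$$
Union-bounding over $\mathcal{C}$ and invoking part (2) of Lemma \ref{lemma:containers},
$$\mathbb{P}\bigl(\alpha(H)>6pM\bigr)\leq |\mathcal{C}|\cdot e^{-pM}\leq \exp\!\left(\frac{(\log s)\,M|T|^3}{s}-pM\right).$$
The hypothesis $p>\frac{2(\log s)|T|^3}{s}$ forces $(\log s)M|T|^3/s < pM/2$, so the exponent is at most $-pM/2$. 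Since $M=s^{k(d-1)}$ is enormous and $pM$ already exceeds $2(\log s)M|T|^3/s$, the quantity $pM/2$ is much larger than any absolute constant, and the probability above is therefore well below $1/4$.

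I do not anticipate a real obstacle here: the whole point of the container framework is to reduce bounds on $\alpha(H)$ for a random induced subgraph to exactly this type of short Chernoff-and-union-bound computation. The only thing that needs verification is that the assumption on $p$ opens a genuine gap between the entropy term $(\log s)M|T|^3/s$ coming from $|\mathcal{C}|$ and the deviation term $pM$ coming from Chernoff; the factor of $2$ in the hypothesis is precisely tuned to give this gap.
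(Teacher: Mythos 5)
Your proposal is correct and follows essentially the same route as the paper: fix the container collection from Lemma \ref{lemma:containers}, apply the multiplicative Chernoff bound to each container with $\lambda=6pM$, and union-bound using the hypothesis on $p$ to make the exponent negative and large. The only difference is that you spell out why $e^{-pM/2}<1/4$, which the paper leaves implicit.
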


\begin{proof}
    Let $\mathcal{C}$ be a collection of subsets of $\mathcal{B}$ satisfying the conditions of Lemma \ref{lemma:containers}. Let $C\in \mathcal{C}$, then $|C|\leq 3M$. Hence, $\mathbb{E}(|C\cap X|)\leq 3pM$. By the multiplicative Chernoff bound (Lemma \ref{lemma:chernoff}), we have 
    $$\mathbb{P}(|C\cap X|\geq 6pM)\leq e^{-pM}.$$
    Note that $$|\mathcal{C}|\cdot e^{-pM}\leq e^{(\log s) M|T|^3/s-pM}<\frac{1}{4}.$$ Hence, by the union bound, with probability at least $3/4$, we have $|C\cap X|\leq 6pM$ for every $C\in \mathcal{C}$. But every independent set of $H$ is contained in $C\cap X$ for some $C\in\mathcal{C}$, so we also have $\alpha(H)\leq 6pM$.
\end{proof}

Finally, let us bound the clique number of $H$.

\begin{claim}\label{claim:clique}
If $p\leq \frac{1}{e|T|}$, then $\omega(H)< \frac{2dk\log s}{\log k}$ with probability at least $3/4$.
\end{claim}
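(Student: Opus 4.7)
The plan is to bound the clique number of $H$ by a first-moment calculation on the number of $r$-cliques that survive the random sampling. The key structural observation is that any clique in $G$ must consist of boxes sharing a common point: by the 1-dimensional Helly property, any family of pairwise intersecting blocks has a point in common (project onto each coordinate axis and use that two projected intervals are either nested or disjoint, so pairwise intersection along every axis forces a common interval in every axis). Hence every clique of $G$ lies inside some maximal clique, and each maximal clique is exactly the set of $|T|$ blocks (one per $\mathbf{t}\in T$) containing a fixed unit cube inside $[0,m]^d$. In particular the number of maximal cliques is at most $m^d$.

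From this, the number of $r$-element cliques in $G$ is at most $m^d\binom{|T|}{r}$. Each such clique survives into $H$ with probability $p^r$, so the expected number of $r$-cliques of $H$ is at most
\[
m^d\binom{|T|}{r}p^r \leq m^d\left(\frac{e|T|p}{r}\right)^r \leq \frac{m^d}{r^r},
\]
using the hypothesis $p \leq \tfrac{1}{e|T|}$. I would then set $r=\lceil\frac{2dk\log s}{\log k}\rceil$ and verify that $r^r \geq 4m^d = 4\,s^{dk}$, equivalently $r\log r \geq dk\log s + \log 4$. Since $r\log r = \frac{2dk\log s}{\log k}\cdot\log r$, this reduces to showing $\log r \geq \tfrac{1}{2}\log k + o(\log k)$, which is immediate from the definition of $r$ once $k$ is large enough relative to $d$, because $r \geq \sqrt{k}$ as soon as $\frac{2d\log s}{\log k} \geq k^{-1/2}$.

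With this choice of $r$, the expected number of $r$-cliques in $H$ is at most $\tfrac{1}{4}$. Markov's inequality then gives $\mathbb{P}(\omega(H)\geq r)\leq\tfrac{1}{4}$, which yields the claim.

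The only potentially delicate step is the counting in (1)–(2): one must argue carefully that every clique in $G$ is contained in a maximal clique of size exactly $|T|$ associated to a unit cube, and that there are only $m^d$ such unit cubes. The rest is a routine first-moment calculation with the binomial estimate $\binom{|T|}{r}\leq (e|T|/r)^r$, which is exactly where the assumption $p\leq 1/(e|T|)$ is tailored to kill the $|T|^r$ factor.
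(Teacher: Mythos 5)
Your proof is correct and follows essentially the same route as the paper: both reduce cliques of $G$ to the $m^d$ sets of $|T|$ blocks containing a fixed unit ($\mathbf{o}$-)block, and then apply the first-moment bound $m^d\binom{|T|}{r}p^r\le m^d r^{-r}$ with $r=\Theta\bigl(\frac{dk\log s}{\log k}\bigr)$, using $p\le\frac{1}{e|T|}$ to kill the $|T|^r$ factor. The only cosmetic differences are that the paper phrases the first moment as a union bound over unit cubes rather than over $r$-cliques, and takes $r=\lceil 2d\log m/\log\log m\rceil$ instead of your slightly larger $r$; both choices satisfy the required inequality $r\log r\ge dk\log s+\log 4$ with room to spare.
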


\begin{proof}
Let $r=\lceil \frac{2d(\log m)}{\log\log m}\rceil$ and let $\mathbf{o}\in\mathbb{N}^d$ be the all zero vector. Note that if $\mathcal{C}\subset \mathcal{B}$ is a clique, then the intersection of the elements of $\mathcal{C}$ contains an $\mathbf{o}$-block. Let $B$ be an $\mathbf{o}$-block of $[0,m]^d$, and let $\mathcal{C}_0\subset \mathcal{B}$ be the elements containing $B$. Then $|\mathcal{C}_0|=|T|$, as for every $\mathbf{t}\in T$, there is a unique $\mathbf{t}$-block containing $B$. Therefore, we have
$$\mathbb{P}(|\mathcal{C}_0\cap X|\geq r)\leq \binom{|T|}{r}p^r\leq \left(\frac{ep|T|}{r}\right)^{r}\leq e^{-r\log r}\leq e^{-2d\log m}\leq \frac{1}{4m^{d}}.$$
The number of $\textbf{o}$-blocks in $[0,m]^d$ is $m^d$, so the union bound shows that with probability at least $3/4$, no $\textbf{o}$-block is contained in $r$ elements of $X$. Thus, recalling that $m=s^{k}$, we have $$\omega(H)< \frac{2d\log m}{\log\log m}<\frac{2dk\log s}{\log k}$$ with probability at least $3/4$.
\end{proof}

Now everything is set to prove our main theorem.

\begin{proof}[Proof of Theorem \ref{thm:main}]
 Given $n$, we will choose the parameters $k,s,p$ such that the graph $H$ defined above has (almost) $n$ vertices, and the conditions of Claims \ref{claim:size}, \ref{claim:independent_set} and \ref{claim:clique} are satisfied. We may assume that $n$ is sufficiently large with respect to $d$, which will ensure that $k$ and $s$ are also sufficiently large with respect to $d$.

Let $k$ be the solution of the equation $n=\frac{1}{8}\cdot k^{4d(d-1)k}$, and let $s=k^{4d}$. Then $n=\frac{1}{8}\cdot s^{k(d-1)}=\frac{M}{8}$, $k=\Theta(\frac{\log n}{\log\log n})$ and 
$$ k^{d-1}\geq |T|=\Theta\left(\left(\frac{\log n}{\log\log n}\right)^{d-1}\right).$$ 
Set $p=\frac{1}{4|T|}$, then $p$ clearly satisfies the condition of Claim \ref{claim:clique}. Also, we have $s>k^{3d}>|T|^{3}$ and  
$$\frac{2(\log s)|T|^4}{s}\leq \frac{4d(\log k) k^{4(d-1)}}{k^{4d}}<\frac{1}{4},$$
which implies that $p$ satisfies the condition of Claim \ref{claim:independent_set}, and thus the condition of Claim \ref{claim:size} as well. Therefore, with probability at least $1/4$, the following are simultaneously  satisfied
\begin{itemize}
    \item $|X|\geq \frac{p|\mathcal{B}|}{2}=\frac{M}{8}=n$,
    \item  $\alpha(H)\leq 6pM=\frac{12n}{|T|}=O(n(\frac{\log n}{\log\log n})^{-(d-1)})$,
    \item $\omega(H)<\frac{2dk\log s}{\log k}=O(k)=O(\frac{\log n}{\log\log n})$.
\end{itemize}
After possibly removing some further vertices of $H$, we get an intersection graph of $n$ boxes in $\mathbb{R}^d$, satisfying the required conditions.
\end{proof}

We remark that in case $d=2$, the graph $G$ defined above is a comparability graph. Indeed, if $B$ is a $\mathbf{t}$-block and $B'$ is $\mathbf{t}'$-block, then writing $B\prec B'$ if $B\cap B'\neq\emptyset$ and $\mathbf{t}(1)<\mathbf{t}'(1)$, $\prec$ is a partial ordering, whose comparability graph is $G$. But then $\omega(H)=\chi(H)$ for every induced subgraph $H$ of $G$, showing that unfortunately one cannot get interesting constructions by considering subgraphs of~$G$. 

\section{Upper bounds}\label{section:upper}

As promised in the introduction, we present the upper bounds on the piercing number and chromatic number of boxes. Without loss of generality, we may assume that any family of boxes under consideration is in \emph{general position}, that is, the corners of the boxes have different coordinates.

In addition, we will use the following notation. Given a family of boxes $\mathcal{B}$ in $\mathbb{R}^d$ and $t\in\mathbb{R}$, let $\mathcal{B}^{-}(t)$ denote the set of elements of $\mathcal{B}$ that are contained entirely in the open half-space $\{\mathbf{x}\in\mathbb{R}^{d}:\mathbf{x}(d)<t\}$.  Similarly, let $\mathcal{B}^{+}(t)$ denote the set of elements of $\mathcal{B}$ that are contained entirely in the open half-space $\{\mathbf{x}\in\mathbb{R}^{d}:\mathbf{x}(d)>t\}$, and set $\mathcal{B}^{0}(t)=\mathcal{B}\setminus (\mathcal{B}^-(t)\cup \mathcal{B}^+(t))$. Finally, let $\mathcal{B}(t)=\{B\cap H:B\in \mathcal{B}\}$, where $H=\{\mathbf{x}\in\mathbb{R}^{d}:\mathbf{x}(d)=t\}$. Then $\mathcal{B}(t)$ is equivalent to a family of boxes in $\mathbb{R}^{d-1}$, and the intersection graph of $\mathcal{B}(t)$ is isomorphic to the intersection graph of $\mathcal{B}^{0}(t)$.

Furthermore, we use the following well known inequalities on recursively bounded functions. Let $f,g:\mathbb{Z}^{+}\rightarrow{R}^{+}$ be monotone increasing functions. If $f(n)\leq f(\lceil n/2\rceil)+g(n)$ holds for every $n\geq 2$, then $f(n)\leq cg(n)\log n$  for some $c>0$ depending only on $f(1)$. Also, if $g$ is convex and $f(n)\leq f(\lfloor n/2\rfloor)+f(\lceil n/2\rceil)+g(n)$ holds for every $n\geq 2$, then $f(n)\leq cg(n)\log n$ for some $c>0$ depending only on $f(1)$. 

\begin{theorem}\label{thm:piercing_upper}
    For every $d\geq 2$ there exists a constant $c_d$ such that if $\mathcal{B}$ is a family of boxes in $\mathbb{R}^d$ satisfying  $\nu(\mathcal{B})\leq k$, then $\tau(\mathcal{B})\leq c_d k(\log k)^{d-2}(\log \log k)^{2}$.
\end{theorem}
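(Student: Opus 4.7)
The plan is to prove the theorem by induction on $d$, where the base case $d=2$ is exactly the theorem of Correa, Feuilloley, P\'erez-Lantero, and Soto \cite{CFPS15}, giving $\tau(\mathcal{B}) \le c_2 k (\log\log k)^2$. For the inductive step, let $T_d(k)$ denote the worst-case value of $\tau(\mathcal{B})$ over all families $\mathcal{B}$ of boxes in $\mathbb{R}^d$ with $\nu(\mathcal{B}) \le k$. I will establish the recurrence
\[
  T_d(k) \le T_d(\lceil k/2 \rceil) + T_d(\lfloor k/2 \rfloor) + T_{d-1}(k)
\]
and then apply the convex recursion estimate stated in the preamble of this section with $g(k) = T_{d-1}(k)$, which by induction is at most $c_{d-1} k (\log k)^{d-3}(\log\log k)^2$, a convex function of $k$ for $k$ large. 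This yields $T_d(k) \le c\, T_{d-1}(k) \log k = O_d\bigl(k(\log k)^{d-2}(\log\log k)^2\bigr)$, as required.

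To derive the recurrence, fix a family $\mathcal{B}$ in $\mathbb{R}^d$ with $\nu(\mathcal{B})=k$ (which we may assume) in general position, and consider the splitting hyperplane $H = \{\mathbf{x} \in \mathbb{R}^d : \mathbf{x}(d) = t\}$. Using the notation introduced above, the trace $\mathcal{B}(t)$ is equivalent to a family of boxes in $\mathbb{R}^{d-1}$ whose independence number is at most $k$, so by induction it can be pierced by $T_{d-1}(k)$ points of $H$; those points simultaneously pierce $\mathcal{B}^0(t)$. The remaining task is to choose $t$ so that both $\nu(\mathcal{B}^-(t))$ and $\nu(\mathcal{B}^+(t))$ are at most $\lceil k/2 \rceil$; then the inductive hypothesis in dimension $d$ applied to each side completes the recurrence.

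The critical elementary fact is that
\[
  \nu(\mathcal{B}^-(t)) + \nu(\mathcal{B}^+(t)) \le \nu(\mathcal{B}) \qquad \text{for every } t \in \mathbb{R},
\]
since the union of any independent subfamily of $\mathcal{B}^-(t)$ with any independent subfamily of $\mathcal{B}^+(t)$ remains independent in $\mathcal{B}$: their members are strictly separated by $H$. Moreover, the function $\phi(t) := \nu(\mathcal{B}^-(t))$ is monotone non-decreasing; it changes only when $t$ crosses the top face of some box (which happens one at a time by general position), at which moment exactly one new box joins $\mathcal{B}^-(t)$, so $\phi$ can increase by at most $1$. Since $\phi(-\infty)=0$ and $\phi(+\infty)=k$, there is $t$ with $\phi(t)=\lfloor k/2\rfloor$; combined with the displayed inequality this forces $\nu(\mathcal{B}^+(t)) \le \lceil k/2 \rceil$, as needed.

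In my view the main obstacle here is the deep CFPS base case for rectangles; the inductive step is a standard K\'arolyi-type divide-and-conquer, and the only nontrivial ingredient is the simple monotone-step-function argument above for locating the splitting coordinate.
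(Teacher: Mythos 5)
Your proof is correct and follows essentially the same route as the paper: induction on $d$ with the CFPS result as the $d=2$ base case, a sweeping hyperplane chosen so that $\nu(\mathcal{B}^-(t))=\lfloor k/2\rfloor$ forces $\nu(\mathcal{B}^+(t))\le\lceil k/2\rceil$, the trace $\mathcal{B}(t)$ handled by the $(d-1)$-dimensional bound, and the convex divide-and-conquer recursion estimate to close the induction. Your added justification of the monotone step behaviour of $t\mapsto\nu(\mathcal{B}^-(t))$ is a detail the paper leaves implicit; otherwise the two arguments coincide.
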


\begin{proof}
Let $f_d(k)$ denote the maximum possible piercing number of a family of boxes in $\mathbb{R}^d$ with independence number $k$, and let $\mathcal{B}$ be a family of boxes realizing it. It is easy to show that $f_{d}$ is a convex function by considering the disjoint union of two families of boxes.  We proceed by induction on $d$ and $k$ to show that $f_d(k)\leq c_d  k(\log k)^{d-2}(\log \log k)^{2}$ with some suitable constant $c_d$. This holds for $d=2$ and every $k\in \mathbb{Z}^+$ by \cite{CFPS15}. Also, we have $f_d(1)=1$ for every $d\in\mathbb{Z}^+$. Assume $d\geq 3$ and $k\geq 2$. It is easy to see that there exists $t$ such that $\nu(\mathcal{B}^-(t))=\lfloor k/2\rfloor$ by the assumption that the boxes are in general position. But then we must have $\nu(\mathcal{B}^+(t))\leq \lceil k/2\rceil$, as no member of $\mathcal{B}^-(t)$ intersects a member of $\mathcal{B}^+(t)$. Finally, we have $\nu(\mathcal{B}(t))=\nu(\mathcal{B}^{0}(t))\leq k$ and $\tau(\mathcal{B}^{0}(t))=\tau(\mathcal{B}(t))\leq f_{d-1}(k)$. From this, we get 
\begin{align*}
    \tau(\mathcal{B})&\leq \tau(\mathcal{B}^{-}(t))+\tau(\mathcal{B}^{+}(t))+\tau(\mathcal{B}^{0}(t))\\
    &\leq f_d(\lfloor k/2\rfloor)+f_d(\lceil k/2\rceil)+f_{d-1}(k)\\
    &\leq cf_{d-1}(k) \log k\leq c_d k(\log k)^{d-2}(\log\log k)^2,
\end{align*}
where $c$ is some absolute constant, and $c_{d}$ only depends on $c_{d-1}$.
\end{proof}

With slight abuse of notation, write $\chi(\mathcal{B})$ and $\omega(\mathcal{B})$ for $\chi(G)$ and $\omega(G)$, respectively, where $G$ is the intersection graph of the family of boxes $\mathcal{B}$. 

\begin{theorem}\label{thm:chi_upper}
    For every $d\geq 2$ there exists a constant $c_d$ such that if $\mathcal{B}$ is a family of $n$ boxes in $\mathbb{R}^d$ satisfying  $\omega(\mathcal{B})\leq \omega$, then $\chi(\mathcal{B})\leq c_d \omega (\log \omega)(\log n)^{d-2}$.
\end{theorem}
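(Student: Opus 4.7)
The plan is to mirror the divide-and-conquer argument used in the proof of Theorem~\ref{thm:piercing_upper}, with the Chalermsook--Walczak bound $\chi(\mathcal{R})\leq O(\omega(\mathcal{R})\log\omega(\mathcal{R}))$ for rectangles \cite{CW21} serving as the base case in place of the piercing bound of \cite{CFPS15}. Let $f_d(n,\omega)$ denote the maximum chromatic number of the intersection graph over all families of at most $n$ boxes in $\mathbb{R}^d$ with clique number at most $\omega$; I would prove $f_d(n,\omega)\leq c_d\,\omega(\log\omega)(\log n)^{d-2}$ by induction on $d$, starting from $f_2(n,\omega)\leq c_2\,\omega\log\omega$.

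For the inductive step with $d\geq 3$ and $n\geq 2$, I would sweep the hyperplane $H_t=\{\mathbf{x}\in\mathbb{R}^d:\mathbf{x}(d)=t\}$ across $\mathcal{B}$. By general position, $|\mathcal{B}^{-}(t)|$ and $|\mathcal{B}^{+}(t)|$ each change by exactly $1$ at each event, and choosing $t$ immediately after $|\mathcal{B}^{-}(t)|$ first reaches $\lceil n/2\rceil$ forces $|\mathcal{B}^{+}(t)|\leq n-\lceil n/2\rceil\leq\lceil n/2\rceil$ as well. The crucial saving over the piercing argument is that no element of $\mathcal{B}^{-}(t)$ intersects any element of $\mathcal{B}^{+}(t)$, so proper colorings of the two halves can share the same palette of size $\max\{\chi(\mathcal{B}^{-}(t)),\chi(\mathcal{B}^{+}(t))\}\leq f_d(\lceil n/2\rceil,\omega)$. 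Using a disjoint set of fresh colors for $\mathcal{B}^{0}(t)$, whose intersection graph is isomorphic to that of a family of at most $n$ boxes in $\mathbb{R}^{d-1}$ with clique number still at most $\omega$, gives
\[
    f_d(n,\omega)\;\leq\;f_d(\lceil n/2\rceil,\omega)+f_{d-1}(n,\omega).
\]
Applying the single-branch recurrence inequality stated just before Theorem~\ref{thm:piercing_upper} with $g(n)=f_{d-1}(n,\omega)$ (monotone in $n$) yields $f_d(n,\omega)\leq c\,f_{d-1}(n,\omega)\log n$ for an absolute constant $c$, and the induction hypothesis closes the computation with $c_d=c\,c_{d-1}$.

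I do not anticipate a genuine obstacle, since the argument runs entirely parallel to Theorem~\ref{thm:piercing_upper} and the $\omega\log\omega$ factor from~\cite{CW21} simply propagates through the induction. The one step that deserves care is the use of $\max$ rather than a sum across the two half-spaces: without this observation one ends up with a two-branch recurrence that produces an extra $\log n$ factor and hence the wrong final exponent, matching only the $\omega^2$-style base case bound of Asplund--Gr\"unbaum. Relatedly, it is important to recurse on $n$ rather than on $\omega$ as is done in the piercing argument, since the hyperplane sweep does not decrease the clique number.
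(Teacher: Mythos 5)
Your proposal is correct and follows essentially the same route as the paper: a hyperplane sweep splitting $\mathcal{B}$ into two separated halves that share a palette of size $f_d(\lceil n/2\rceil,\omega)$, a fresh palette for the slab handled by the $(d-1)$-dimensional bound, and the single-branch recurrence $f_d(n,\omega)\leq f_d(\lceil n/2\rceil,\omega)+f_{d-1}(n,\omega)$ resolved against the \cite{CW21} base case. Your emphasis on taking the maximum over the two halves (rather than the sum, as in the piercing argument) and on recursing in $n$ rather than $\omega$ is exactly the right point of care, and matches the paper's proof.
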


\begin{proof}
Let $f_{d,\omega}(n)$ denote the maximum chromatic number of a family of $n$ boxes in $\mathbb{R}^d$ with clique number at most $\omega$, and let $\mathcal{B}$ be family of $n$ boxes realizing it.   We proceed by induction on $d$ to show that $f_{d,\omega}(n)\leq c_d \omega (\log \omega)(\log n)^{d-2}$ with some suitable constant $c_d$. This holds for $d=2$ and every $n,\omega\in \mathbb{Z}^+$ by \cite{CW21}. Also, we have $f_{d,\omega}(1)=1$ for every $d\in\mathbb{Z}^+$. Assume $d\geq 3$ and $n\geq 2$. It is easy to see that there exists $t$ such that $|\mathcal{B}^-(t)|=\lfloor n/2\rfloor$ by the assumption that the boxes are in general position. But then  $|\mathcal{B}^+(t))|\leq \lceil n/2\rceil$. Moreover, we have $\omega(\mathcal{B}(t))=\omega(\mathcal{B}^{0}(t))\leq \omega$ and $\chi(\mathcal{B}^{0}(t))=\chi(\mathcal{B}(t))\leq f_{d-1,\omega}(n)$. Consider the coloring in which we use the same set of at most $f_{d,\omega}(\lceil n/2\rceil)$ colors to color both $\mathcal{B}^-(t)$ and $\mathcal{B}^+(t)$, and use a different set of at most $f_{d-1,\omega}(n)$ colors to color $\mathcal{B}^{0}(t)$. This shows that
\begin{align*}
    \chi(\mathcal{B})&\leq f_{d,\omega}(\lceil n/2\rceil)+f_{d-1,\omega}(n)\\
    &\leq cf_{d-1,\omega}(n) \log n\leq c_d \omega(\log \omega)(\log n)^{d-2},
\end{align*}
where $c$ is some absolute constant, and $c_{d}$ only depends on $c_{d-1}$.
\end{proof}

\section{Concluding remarks}

J\'anos Pach and G\'abor Tardos brought to my attention that there is more straightforward way of dealing with independent sets in the proof of Theorem \ref{thm:main}, which does not rely on the container method.

It follows form the proof of Lemma \ref{lemma:saturation} that the graph $G$ is the edge disjoint union of $q=O(\frac{M|T|^2}{s})$ complete bipartite graphs. Let $X_i,Y_i$ be the vertex classes of these bipartite graphs, where $i=1,\dots,q$. Define the collection  $\mathcal{I}$ of subsets of $V(G)$ as follows. For every $i\in [q]$, let $Z_i$ be equal to either $X_i$ or $Y_i$, and let $I=I(Z_1,\dots,Z_q)$ be the set of vertices $x\in V(G)$ such that whenever $x\in X_i\cup Y_i$, we have $x\in Z_i$. Let $\mathcal{I}$ be the collection of the $2^q$ such sets $I$, then $|\mathcal{I}|\leq 2^{O(M|T|^2/s)}$. Observe that every element of $\mathcal{I}$ is an independent set, and  $\mathcal{I}$ contains every \emph{maximal} independent set of $G$ (with respect to containment). Therefore, in the proof of Theorem \ref{thm:main}, instead of working with the collection of containers $\mathcal{C}$, one can work with $\mathcal{I}$ directly. However, this does not effect the quantitative bounds in Theorem \ref{thm:main}.

\bigskip

\noindent
\textbf{Acknowledgements.} We would like to thank J\'anos Pach and G\'abor Tardos for their valuable remarks.

\end{document}